\numberwithin{equation}{section}
\numberwithin{figure}{section}
\theoremstyle{plain}
\newtheorem{thm}{\protect\theoremname}
\theoremstyle{definition}
\newtheorem{defn}[thm]{\protect\definitionname}
\theoremstyle{definition}
\theoremstyle{corollary}
\theoremstyle{note}
\theoremstyle{remark}
\numberwithin{equation}{section}
\newcommand{\bs}{\backslash}
\providecommand{\definitionname}{Definition}
\providecommand{\theoremname}{Theorem}
\begin{document}
\title{Two more characterisations of nearly pseudocompact space }
\author{Biswajit Mitra$^1$}
\address{Department of Mathematics. The University of Burdwan, Burdwan 713104,
West Bengal, India }
\email{bmitra@math.buruniv.ac.in}
\author{Sanjib Das $^2$}
\address{Department of Mathematics. The University of Burdwan, Burdwan 713104,
West Bengal, India}
\email{ruin.sanjibdas893@gmail.com}
\subjclass[2010]{Primary 54C30; Secondary 54D60, 54D35.}
\keywords{Hard sets, Nearly pseudocompact }
\thanks{$^1$Corresponding Author}
\thanks{$^2$The second author's research grant is supported by CSIR, Human Resource
Development group, New Delhi-110012, India }
\begin{abstract} In this paper we have obtained two more characterizations of nearly pseudocompact spaces.
\end{abstract}
\maketitle

\section{introduction}

In this paper by a space we always mean Tychonoff unless otherwise
mentioned. As usual $\beta X$ and $\upsilon X$ denote respectively
the Stone-$\check{C}$ech compactification and Hewitt realcompactification
of $X$. We know that the property of being pseudocompact can be described
in terms of $\beta X$ and $\upsilon X$; that is $X$ is pseudocompact
if and only if $\beta X\backslash X=$$\upsilon X\backslash X$. In
the year 1980 Henriksen and Rayburn in \cite{hr80} studied those
spaces $X$ where $\upsilon X\backslash X$ is dense in $\beta X\backslash X$
referred as nearly pseudocompact, obviously a generalization of pseudocompactness.
In that paper they have extensively studied different properties of
nearly pseudocompact space, furnishing two characterization of these
spaces. Infact they have shown that {[}\cite{hr80},Theorem 3.10{]}
$X$ is nearly pseudocompact if and only if $X$ can be expressed
as $X_{1}\cup X_{2},$ where $X_{1}$ is a regular closed almost locally
compact pseudocompact subset and $X_{2}$ is regular closed anti-locally
realcompact and $int_{X}(X_{1}\cap X_{2})=\emptyset$ if and only
if {[}\cite{hr80}, Theorem 3.18{]} every countable family of pairwise
disjoint non-empty regular hard subset of $X$ has a limit point in
$X$. Further John J. Schommer furnished few characterizations of
nearly pseudocompact spaces in his paper{[}\cite{J93}, Theorem1.3,
Theorem3.1, Corollary3.2{]}. In the year 2005 B. Mitra and S.K. Acharyya
in their paper {[}\cite{ma05}, Theorem 2.5, Theorem 3.4, Theorem
3.6, Theorem 3.7{]} produced few more characterizations of nearly
pseudocompact spaces.

In this paper we have further shown two more characterizations of
nearly pseudocompact space. In fact we have proved the following statements.
$X$ is nearly pseudocompact if and only if $X$ does not contain
any $C$-embedded copy of $\mathds{N}$ which is hard in $X$ if and
only if any family of hard sets with finite intersection property
has non-void intersection.

\section{Preliminaries}

All the basic symbols, preliminary ideas, and terminologies are taken
from the book of L.Gillman and M. Jerison, Rings of Continuous Functions
\cite{gj60}. Now we recall few notations which are used several times
over here. If $f\in C(X)$ or $C^{*}(X)$, we call the set $Z(f)=\{x\in X:f(x)=0\}$,
the zero set of $f$ and its complement is called cozero set or cozero
part of $f$ which is denoted as $cozf$. Support of a continuous
function is the closure of a set in $X$ where the function does not
vanish; that is, if $f\in C(X)$, then $cl_{X}(X\backslash Z(f))$
is the support of $f$. Let $Y\subseteq X$. If for every $f\in C(Y)$(or
$C^{*}(Y)$) there exists $g\in C(X)$ such that $g(y)=f(y)$ for
all $y\in Y$, then $Y$ is called $C$-embedded (or respectively
$C^{*}$-embedded) in $X$. When an $f\in C(X)$ is unbounded on $E$,
then $E$ has a $C$-embedded copy of $\mathds{N}$ along which $f$
goes to $\infty$ {[}\cite{gj60}, Theorem 1.21{]}. Rayburn \cite{r76} introduced the notion of hard
set in 1976.
\begin{defn}
A subspace $H$ of $X$ is called hard in $X$ if $H$ is closed in
$X\cup K$, $K=cl_{\beta X}(\upsilon X\bs X)$ where $\beta X$ and
$\upsilon X$ are the Stone-$\check{C}$ech compactification and Hewitt
realcompactification of $X$ respectively.
\end{defn}

It immediately follows that every hard set is closed in $X$, but
the converse is obviously not true. Again every compact subset of
$X$ is hard, but the converse may not be true. In the year 1980,
Henriksen and Rayburn in {[}\cite{hr80}, Theorem 3.8{]} proved the
following theorem in general set up.
\begin{thm}
The followings are equivalent.

(1) $X$ is nearly pseudocompact

(2) Every hard set is compact

(3) Every regular hard set is compact
\end{thm}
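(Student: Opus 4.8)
The plan is to prove the cycle $(1)\Rightarrow(2)\Rightarrow(3)\Rightarrow(1)$, noting at the outset that $(2)\Rightarrow(3)$ is immediate because every regular hard set is in particular a hard set. So the real content lies in the first and last implications.

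For $(1)\Rightarrow(2)$, I would first translate near pseudocompactness into a statement about $K=cl_{\beta X}(\upsilon X\bs X)$. Since $\upsilon X\bs X$ is dense in $\beta X\bs X$, computing the closure relative to the subspace $\beta X\bs X$ gives $K\cap(\beta X\bs X)=cl_{\beta X\bs X}(\upsilon X\bs X)=\beta X\bs X$, hence $\beta X\bs X\subseteq K$ and therefore $X\cup K=\beta X$. Now if $H$ is hard, then by definition $H$ is closed in $X\cup K=\beta X$; being a closed subset of the compact space $\beta X$, it is compact. This settles $(1)\Rightarrow(2)$ cleanly.

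The substance is in $(3)\Rightarrow(1)$, which I would prove by contraposition: assuming $X$ is not nearly pseudocompact, I construct a regular hard set that fails to be compact. Failure of density of $\upsilon X\bs X$ in $\beta X\bs X$ means $K\cap(\beta X\bs X)\neq\beta X\bs X$, so there is a point $p\in(\beta X\bs X)\bs K$. Since $K$ is closed in the compact Hausdorff, hence normal, space $\beta X$ and $p\notin K$, Urysohn's lemma furnishes $g\in C(\beta X,[0,1])$ with $g(p)=0$ and $g\equiv 1$ on $K$. I then set $S=X\cap g^{-1}([0,1/2))$, an open subset of $X$, and take $H=cl_X(S)$, which is regular closed since it is the closure of the open set $S$.

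It remains to verify that $H$ is non-compact and hard, and this is the step I expect to be the main obstacle. For non-compactness, density of $X$ in $\beta X$ places $p$ in $cl_{\beta X}(S)=cl_{\beta X}(H)$; were $H$ compact it would be closed in $\beta X$ and thus contain $p\in\beta X\bs X$, impossible since $H\subseteq X$. For hardness I must show $H$ is closed in $X\cup K$, i.e. $cl_{\beta X}(H)\cap(X\cup K)=H$. The decisive observation is that $g$ separates the relevant sets: $cl_{\beta X}(S)\subseteq g^{-1}([0,1/2])$ while $K\subseteq g^{-1}(\{1\})$, so $cl_{\beta X}(S)\cap K=\emptyset$. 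Consequently $cl_{\beta X}(H)\cap(X\cup K)=cl_{\beta X}(S)\cap X=cl_X(S)=H$, so $H$ is hard. This produces a non-compact regular hard set, contradicting $(3)$ and closing the cycle.
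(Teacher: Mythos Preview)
The paper does not give its own proof of this theorem; it is quoted in the preliminaries from Henriksen and Rayburn \cite{hr80}, Theorem~3.8, so there is no in-paper argument to compare against. Your proof is correct and follows essentially the classical route.

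A few remarks. Your $(1)\Rightarrow(2)$ is exactly the right reduction: near pseudocompactness says $\upsilon X\setminus X$ is dense in $\beta X\setminus X$, whence $\beta X\setminus X\subseteq K$ and $X\cup K=\beta X$, so hardness becomes closedness in a compact space. For $(3)\Rightarrow(1)$, your Urysohn construction separating a point $p\in(\beta X\setminus X)\setminus K$ from $K$ and taking $H=cl_X\bigl(X\cap g^{-1}([0,1/2))\bigr)$ is the standard device; the verification that $H$ is hard via $cl_{\beta X}H\subseteq g^{-1}([0,1/2])$ disjoint from $K\subseteq g^{-1}(\{1\})$ is clean. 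The one step you might spell out more explicitly is why $p\in cl_{\beta X}S$: since $g^{-1}([0,1/2))$ is an open $\beta X$-neighbourhood of $p$, every neighbourhood of $p$ meets it, and density of $X$ in $\beta X$ then forces the intersection with $S=X\cap g^{-1}([0,1/2))$ to be nonempty. You gesture at this, but a referee might ask for the line to be written out. You should also note that $S\neq\emptyset$ (same density argument), so that $H$ is a genuine nonempty regular closed set.
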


In the year 2005, Mitra and Acharyya in {[}\cite{ma05},Theorem 2.5{]}
furnished some characterizations of nearly pseudocompact space using
the notion of $C_{H}(X)$ and $H_{\infty}(X)$ among them the following
characterization is relevant to this paper.
\begin{thm}
$X$ is neraly pseudocompact if and only if $C_{H}(X)\subseteq C^{*}(X)$,
where $C_{H}(X)=\{f\in C(X)|cl_{X}(X\backslash Z(f))$ is hard in
$X\}$.
\end{thm}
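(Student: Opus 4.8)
The plan is to prove the two implications separately, using the Henriksen--Rayburn equivalence quoted above (near pseudocompactness $\Leftrightarrow$ every hard set is compact) for the forward direction, and an explicit construction inside $\beta X$ for the converse. For the forward implication, suppose $X$ is nearly pseudocompact and let $f\in C_{H}(X)$, so that $S:=cl_{X}(X\setminus Z(f))$ is hard in $X$. By the Henriksen--Rayburn theorem, $(1)\Rightarrow(2)$, the hard set $S$ is compact, so the continuous $f$ is bounded on $S$; since $f$ vanishes identically on $X\setminus S\subseteq Z(f)$, it is bounded on all of $X$, that is $f\in C^{*}(X)$. This direction is routine.

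For the converse I would argue by contraposition: assuming $X$ is \emph{not} nearly pseudocompact I will produce an unbounded $f\in C_{H}(X)$, contradicting $C_{H}(X)\subseteq C^{*}(X)$. Write $K=cl_{\beta X}(\upsilon X\setminus X)$. Near pseudocompactness says $\upsilon X\setminus X$ is dense in $\beta X\setminus X$, equivalently $\beta X\setminus X\subseteq K$; its failure supplies a point $p\in(\beta X\setminus X)\setminus K$. As $\beta X$ is compact Hausdorff, hence normal, I choose an open $U\ni p$ with $cl_{\beta X}(U)\cap K=\emptyset$ and, by Urysohn's lemma, a function $\phi\in C(\beta X)$ with $0\le\phi\le1$, $\phi(p)=1$ and $\phi\equiv0$ off $U$; then $coz\,\phi\subseteq U$ gives $cl_{\beta X}(coz\,\phi)\subseteq cl_{\beta X}(U)$, so $cl_{\beta X}(coz\,\phi)\cap K=\emptyset$. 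Since $\upsilon X\setminus X\subseteq K$ and $p\notin X$, the point $p$ lies in $\beta X\setminus\upsilon X$, so by the standard description of $\upsilon X$ in \cite{gj60} there is $g\in C(X)$, $g\ge0$, whose extension to $[-\infty,\infty]$ takes the value $+\infty$ at $p$. I then set $f=(\phi|_{X})\,g\in C(X)$.

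It remains to verify the two properties of $f$, and this is where the real work lies. First, $f$ is unbounded: approaching $p$ through $X$ one has $\phi\to\phi(p)=1$ while $g\to+\infty$, so the extension of $f$ equals $+\infty$ at $p$ and hence $f\notin C^{*}(X)$. Secondly, and this is the delicate point, the support $S=cl_{X}(X\setminus Z(f))$ must be shown hard. Here I use $coz\,f\subseteq coz\,\phi$, whence $cl_{\beta X}(S)=cl_{\beta X}(coz\,f)\subseteq cl_{\beta X}(coz\,\phi)$, so that $cl_{\beta X}(S)\cap K=\emptyset$; combined with the automatic identity $cl_{\beta X}(S)\cap X=S$ this yields $cl_{\beta X}(S)\cap(X\cup K)=S$, which is precisely the assertion that $S$ is closed in $X\cup K$, i.e. hard. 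Thus $f\in C_{H}(X)\setminus C^{*}(X)$, completing the contrapositive. The main obstacle I anticipate is making the support of $f$ avoid $K$ while keeping $f$ unbounded at $p$; separating $p$ from the compact set $K$ by a bump $\phi$ whose cozero closure misses $K$ is exactly what renders the two requirements simultaneously achievable.
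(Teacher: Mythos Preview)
Your proof is correct, but note that the present paper does not actually prove this statement: it is quoted in Section~2 as a preliminary result from \cite{ma05} (Theorem~2.5 there), so there is no argument in this paper to compare yours against.

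On the substance: both implications are sound. The forward direction is immediate from the Henriksen--Rayburn characterisation (hard $\Rightarrow$ compact), exactly as you wrote. For the converse your construction is the natural one: pick $p\in(\beta X\setminus X)\setminus K$, separate $p$ from the closed set $K$ by a bump $\phi\in C(\beta X)$ whose cozero closure misses $K$, and multiply by an unbounded $g\in C(X)$ witnessing $p\notin\upsilon X$. The only place I would tighten the exposition is the unboundedness of $f=(\phi|_X)g$: the phrase ``the extension of $f$ equals $+\infty$ at $p$'' implicitly treats $\phi\cdot g^{*}$ as continuous at $p$, but the product map $[0,1]\times[0,\infty]\to[0,\infty]$ is ill-defined at $(0,\infty)$. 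A cleaner formulation is to intersect the $\beta X$-neighbourhoods $\{\phi>1/2\}$ and $\{g^{*}>2M\}$ of $p$ with the dense set $X$ to obtain $x\in X$ with $f(x)>M$. The hardness verification via $cl_{\beta X}(S)\cap(X\cup K)=S$ is clean and correct.
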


In the same paper \cite{ma05} under section three, the authors introduced
an intermediate subring $\chi(X)$ of $C(X)$ to characterise nearly
pseudocompact space. The ring $\chi(X)=C^{*}(X)+C_{H}(X)$, is the
smallest subring of $C(X)$ containing $C^{*}(X)$ and $C_{H}(X)$,
which is isomorphic with $C(Z)$ for some space $Z$. In the same
paper {[}\cite{ma05},Theorem 2.8{]} it was proved that $\upsilon_{C_{H}}X=X\cup K$
and from the definition of $\chi(X)$ it follows that $\upsilon_{\chi}X=\upsilon_{C_{H}}X$
where for any subset $A$ of $C(X)$, $\upsilon_{A}X=\{p\in\beta X:f^{*}(p)\in\mathds{R},\forall f\in A\}$.
So finally we have $\upsilon_{\chi}X=X\cup K$, where $K=cl_{\beta X}(\upsilon X\backslash X)$.

The following characterizations of nearly pseudocompact space in {[}\cite{ma05},
Theorem 3.4{]} were proved using the notion of $\chi(X)$ and $\upsilon_{\chi}X$
\begin{thm}
For a space $X$, the followings are equivalent.

(1) $X$ is nearly pseudocompact.

(2) $\chi(X)=C^{*}(X)$

(3) $|\beta X\backslash\upsilon_{\chi}(X)|<2^{c}$
\end{thm}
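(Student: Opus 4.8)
The plan is to prove the cycle of implications by reducing the ring-theoretic and cardinal conditions to the Mitra--Acharyya criterion $C_{H}(X)\subseteq C^{*}(X)$ and to the defining density property that $\upsilon X\bs X$ is dense in $\beta X\bs X$. The equivalence of (1) and (2) should be almost immediate. Since $\chi(X)=C^{*}(X)+C_{H}(X)$ one always has $C^{*}(X)\subseteq\chi(X)$; if $X$ is nearly pseudocompact then $C_{H}(X)\subseteq C^{*}(X)$ by the cited characterisation, whence $\chi(X)=C^{*}(X)+C_{H}(X)=C^{*}(X)$. Conversely, $\chi(X)=C^{*}(X)$ forces $C_{H}(X)\subseteq\chi(X)=C^{*}(X)$, and the same theorem returns near pseudocompactness. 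So (1)$\Leftrightarrow$(2) needs only the definition of $\chi(X)$ together with the theorem $X$ nearly pseudocompact $\iff C_{H}(X)\subseteq C^{*}(X)$ recorded above from \cite{ma05}.

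For (1)$\Rightarrow$(3) I would use the identity $\upsilon_{\chi}X=X\cup K$ with $K=cl_{\beta X}(\upsilon X\bs X)$. Near pseudocompactness says exactly that $\upsilon X\bs X$ is dense in $\beta X\bs X$, so $K\supseteq\beta X\bs X$ and hence $X\cup K=\beta X$. Then $\beta X\bs\upsilon_{\chi}X=\emptyset$, whose cardinality is $0<2^{c}$.

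The substantive direction is (3)$\Rightarrow$(1), which I would prove contrapositively: assuming $X$ is not nearly pseudocompact, I will exhibit a copy of $\beta\mathds{N}\bs\mathds{N}$, of cardinality $2^{c}$, sitting inside $\beta X\bs\upsilon_{\chi}X$. By the $C_{H}(X)\subseteq C^{*}(X)$ characterisation, failure of near pseudocompactness yields some $g\in C_{H}(X)$ that is unbounded; thus $g$ is unbounded on $coz\,g$, and \cite[Theorem 1.21]{gj60} produces a $C$-embedded copy $N$ of $\mathds{N}$ inside $coz\,g$ along which $g\to\infty$. Being $C$-embedded in $X$, $N$ is $C^{*}$-embedded in $\beta X$, so $cl_{\beta X}N=\beta N$ and $cl_{\beta X}N\bs N$ has cardinality $2^{c}$. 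Since $N$ is closed and discrete in $X$ we have $cl_{\beta X}N\cap X=cl_{X}N=N$, so $cl_{\beta X}N\bs N\subseteq\beta X\bs X$.

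It remains to keep these $2^{c}$ points out of $K$, and here the hardness hypothesis does the real work. The support $H=cl_{X}(coz\,g)$ is hard, i.e. closed in $X\cup K$, so $cl_{\beta X}(H)\cap(X\cup K)=H$; as $N\subseteq H$ this gives $(cl_{\beta X}N\bs N)\cap(X\cup K)\subseteq H$. But $H\subseteq X$ while $cl_{\beta X}N\bs N$ misses $X$, forcing $(cl_{\beta X}N\bs N)\cap(X\cup K)=\emptyset$. Hence $cl_{\beta X}N\bs N\subseteq\beta X\bs(X\cup K)=\beta X\bs\upsilon_{\chi}X$, so $|\beta X\bs\upsilon_{\chi}X|\geq2^{c}$ and (3) fails. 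I expect this last step---using that a hard support cannot accumulate to points of $K$, so that the whole $\mathds{N}$-remainder is trapped outside $\upsilon_{\chi}X$---to be the crux; the remainder of the argument is bookkeeping with the identities $\upsilon_{\chi}X=X\cup K$ and $\chi(X)=C^{*}(X)+C_{H}(X)$.
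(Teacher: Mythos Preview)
This theorem is not proved in the present paper; it is quoted from \cite[Theorem~3.4]{ma05} in the Preliminaries section as background, so there is no in-paper proof to compare against directly. Your argument is correct, and in fact your contrapositive for $(3)\Rightarrow(1)$---extracting from an unbounded $g\in C_{H}(X)$ a closed, discrete, $C$-embedded copy $N$ of $\mathds{N}$ inside a hard support $H$, and then using hardness of $H$ (i.e.\ $cl_{\beta X}H\cap(X\cup K)=H$) to force $cl_{\beta X}N\bs N\subseteq\beta X\bs\upsilon_{\chi}X$---is exactly the mechanism the paper itself invokes, more tersely, in its proof of Theorem~3.1: there the line ``$cl_{\beta X}\mathds{N}\bs\mathds{N}\subseteq\beta X\bs\upsilon_{\chi}X$'' is asserted for a hard $C$-embedded $\mathds{N}$ and the cardinality bound $2^{c}$ is read off just as you do, while the converse of 3.1 runs your $C_{H}(X)\nsubseteq C^{*}(X)$ argument verbatim. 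So your proof is entirely in the spirit of, and consistent with, the techniques the paper actually uses.
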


Mitra and Das in their communicated paper \cite{MD22}
constructed nearly pseudocompact extension of a space, referred as
nearly pseudocompactification of the space, with the help of special class of $z$-ultrafilters,
called $hz$-ultrafilter.
\begin{defn}
An $hz$-filter is a $z$-filter, having a base consisting of hard-zero
sets in $X$. Maximal $hz$-filter is called $hz$-ultrafilter.
\end{defn}

Using the notion of $hz$-filters and $hz$-ultrafilters, Mitra and
Das obtained the following another characterization of nearly pseudocompact
space{[}\cite{MD22}, Theorem 2.14{]} which has been used in the present
paper.
\begin{thm}
The followings are equivalent.

(1) $X$ is nearly pseudocompact

(2) Every $hz$-filter is fixed

(3) Every $hz$-ultrafilter is fixed
\end{thm}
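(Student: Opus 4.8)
The plan is to prove the cycle $(1)\imply(2)\imply(3)\imply(1)$. The implication $(2)\imply(3)$ is immediate: every $hz$-ultrafilter is in particular an $hz$-filter, so if all $hz$-filters are fixed then so are all $hz$-ultrafilters.

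For $(1)\imply(2)$, suppose $X$ is nearly pseudocompact and let $\mathcal{F}$ be an $hz$-filter with a base $\mathcal{B}$ consisting of hard-zero sets. By the theorem of Henriksen and Rayburn quoted above, near pseudocompactness forces every hard set to be compact; hence each member of $\mathcal{B}$ is a compact subset of $X$. Since $\mathcal{B}$ is a filter base it has the finite intersection property, so fixing one $B_{0}\in\mathcal{B}$, the family $\{B\cap B_{0}:B\in\mathcal{B}\}$ consists of closed subsets of the compact set $B_{0}$ and still has the finite intersection property; by compactness its intersection is non-empty. As $\mathcal{B}$ is a base for $\mathcal{F}$ we get $\bigcap\mathcal{F}=\bigcap\mathcal{B}\neq\emptyset$, so $\mathcal{F}$ is fixed.

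The substance of the argument is $(3)\imply(1)$, which I would prove by contraposition. Assume $X$ is not nearly pseudocompact. By the characterisation quoted above, namely that $X$ is nearly pseudocompact if and only if $C_{H}(X)\subseteq C^{*}(X)$, there is some $h\in C_{H}(X)$ that is unbounded on $X$; replacing $h$ by $|h|$ we may assume $h\geq 0$, and by definition $S:=cl_{X}(cozh)=cl_{X}(X\bs Z(h))$ is hard. For each $n\in\mathds{N}$ set $Z_{n}=\{x\in X:h(x)\geq n\}$, which is the zero-set of $(n-h)\vee 0$ and hence a genuine zero set. Each $Z_{n}$ is closed in $X$ and contained in $S$, and a closed subset of a hard set is again hard: if $A$ is closed in $X$ with $A\subseteq S$, then $cl_{\beta X}A\cap K\subseteq cl_{\beta X}S\cap K=\emptyset$, which is exactly the statement that $A$ is closed in $X\cup K$. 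Thus every $Z_{n}$ is a hard-zero set.

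The sets $Z_{n}$ are non-empty because $h$ is unbounded, and they form a decreasing chain, so $\{Z_{n}:n\in\mathds{N}\}$ is a base for an $hz$-filter $\mathcal{F}$. This filter is free, since $\bigcap_{n}Z_{n}=\{x:h(x)\geq n\ \text{for every}\ n\}=\emptyset$ as $h$ is real-valued. Already this free $hz$-filter contradicts $(2)$; to contradict $(3)$ I would extend $\mathcal{F}$ by Zorn's Lemma to an $hz$-ultrafilter $\mathcal{U}$ (the union of a chain of $hz$-filters is again an $hz$-filter, with the union of the hard-zero bases serving as a base), and observe that freeness passes upward: were $\mathcal{U}$ fixed at a point $p$, then $p\in\bigcap\mathcal{U}\subseteq\bigcap\mathcal{F}$, contrary to $\mathcal{F}$ being free. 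Hence $\mathcal{U}$ is a free $hz$-ultrafilter and $(3)$ fails. The main obstacle throughout is the book-keeping in this last step: one must manufacture honest \emph{hard-zero} sets rather than merely hard sets, for which the two facts doing the work are that closed subsets of hard sets stay hard and that the super-level sets of a non-negative continuous function are zero sets.
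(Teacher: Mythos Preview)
The present paper does not prove this theorem; it is quoted in the Preliminaries from the companion manuscript \cite{MD22} (Theorem~2.14 there) and then invoked as a black box in the proof of Theorem~3.2. So there is no in-paper argument to set yours against.

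That said, your proof is sound and is presumably close in spirit to what \cite{MD22} does. The implication $(1)\Rightarrow(2)$ correctly reduces to compactness of hard sets via Henriksen--Rayburn, and for $(3)\Rightarrow(1)$ the contrapositive through $C_H(X)\nsubseteq C^*(X)$ is exactly the right move: an unbounded $h\in C_H(X)$ with $h\geq 0$ yields the nested hard zero-sets $Z_n=\{x:h(x)\geq n\}\subseteq cl_X(X\setminus Z(h))$, hence a free $hz$-filter, hence (by Zorn) a free $hz$-ultrafilter. One small correction to your parenthetical justification: ``$S$ hard'' does \emph{not} literally say $cl_{\beta X}S\cap K=\emptyset$, only that $cl_{\beta X}S\cap(X\cup K)=S$; since $K$ may meet $X$, the intersection $cl_{\beta X}S\cap K=S\cap K$ need not be empty. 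The clean way to see that a closed-in-$X$ subset $A$ of a hard set $S$ is again hard is to observe $cl_{X\cup K}(A)\subseteq cl_{X\cup K}(S)=S\subseteq X$, whence $cl_{X\cup K}(A)=cl_{X\cup K}(A)\cap X=cl_{X}(A)=A$. This glitch does not affect the overall validity of your argument.
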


\section{Two more characterizations}

In this section we shall prove two more characterizations of nearly
pseudocompact space. We know from {[}\cite{gj60}, Theorem 1.21{]}
that a space $X$ is pseudocompact if and only if $X$ does not contain
any $C$-embedded copy of $\mathds{N}$. We have obtained the following
theorem for nearly pseudocompact space in the similar fashion to the
above statement for pseudocompact space.
\begin{thm}
A space $X$ is nearly pseudocompact if and only if $X$ does not
contain any $C$-embedded copy of $\mathds{N}$ which is hard in $X$.
\end{thm}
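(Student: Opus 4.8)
The plan is to prove the contrapositive in both directions, leaning on Theorem 1.2 (the Henriksen--Rayburn characterisation: $X$ is nearly pseudocompact iff every hard set is compact) and on the link between unbounded functions and $C$-embedded copies of $\mathds{N}$ recorded in the Preliminaries (\cite{gj60}, Theorem 1.21).

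For the ``only if'' direction, suppose $X$ contains a $C$-embedded copy $N$ of $\mathds{N}$ that is hard in $X$. Since $N$ is homeomorphic to $\mathds{N}$, it is a countable discrete space and hence not compact; but it is hard in $X$, so by Theorem 1.2 $X$ fails to be nearly pseudocompact. (Alternatively one can argue directly: a $C$-embedded copy of $\mathds{N}$ carries an unbounded continuous function, which extends to an unbounded $f \in C(X)$ whose support is contained in a suitably small hard set, placing $f$ in $C_H(X) \setminus C^*(X)$ and contradicting Theorem 1.3; but the appeal to Theorem 1.2 is cleaner.)

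For the ``if'' direction I would again argue contrapositively: assume $X$ is not nearly pseudocompact and produce a $C$-embedded copy of $\mathds{N}$ that is hard in $X$. By Theorem 1.3, $C_H(X) \not\subseteq C^*(X)$, so fix $f \in C_H(X)$ with $f$ unbounded on $X$; set $H = cl_X(X \setminus Z(f))$, which is hard in $X$ by definition of $C_H(X)$. Now $f$ is unbounded on $H$, so by \cite{gj60}, Theorem 1.21 there is a $C$-embedded copy $N$ of $\mathds{N}$ sitting inside $X \setminus Z(f)$ along which $f \to \infty$. The remaining work — and I expect this to be the crux of the argument — is to show that this particular copy $N$ can be taken to be \emph{hard} in $X$, not merely closed. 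The natural approach is to choose the points $x_n \in N$ so that $f(x_n) \to \infty$ rapidly, and to verify that $N$ is closed in $X \cup K$ where $K = cl_{\beta X}(\upsilon X \setminus X)$: since $f$ extends continuously (as a map into $\mathds{R}\cup\{\infty\}$) over $X \cup K$ with value on $K$ forced to be finite at points where hard-zero-set reasoning applies, no point of $K$ can be a limit of $N$; combined with the discreteness of $N$ in $X$ this gives closedness in $X \cup K$. One should also arrange that $N$ is $C$-embedded in $X$, which follows because a sequence going to infinity along $f$ can be separated by the cozero sets of $f$ composed with suitable real functions, so that every bounded real sequence on $N$ is realised by a function in $C(X)$.

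The one subtlety worth flagging is that ``$N$ closed in $X$'' does not automatically upgrade to ``$N$ hard in $X$''; the upgrade uses that $N$ lies in the hard set $H$ and that $f|_H$ is proper enough (its level sets are hard, indeed compact-in-$H$-fibres fail precisely because $X$ is not nearly pseudocompact) to push limit points of $N$ away from $K$. If a direct verification proves awkward, an alternative is to invoke Theorem 1.5: non-near-pseudocompactness yields a free $hz$-ultrafilter, from whose base of hard zero-sets one extracts a decreasing sequence with empty intersection and picks $x_n$ from the differences, obtaining a closed discrete — hence after a routine check, hard and $C$-embedded — copy of $\mathds{N}$. Either route closes the loop, establishing the equivalence.
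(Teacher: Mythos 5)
Your ``only if'' direction is fine, and in fact it is simpler than the paper's: you quote Theorem 1.2 (a hard, noncompact set --- here a copy of $\mathds{N}$ --- kills near pseudocompactness), whereas the paper argues via $cl_{\beta X}\mathds{N}=\beta\mathds{N}$ and the cardinality criterion $|\beta X\backslash\upsilon_{\chi}X|<2^{c}$ of Theorem 1.4. Either route is legitimate.

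The problem is in the converse, where you correctly set up $f\in C_{H}(X)\backslash C^{*}(X)$, $H=cl_{X}(X\backslash Z(f))$ hard, and a $C$-embedded copy $N$ of $\mathds{N}$ inside $H$ on which $f\to\infty$, but then declare the hardness of $N$ to be ``the crux'' and leave it resting on vague devices: extending $f$ to $X\cup K$ with values ``forced to be finite where hard-zero-set reasoning applies,'' properness of $f|_{H}$, ``compact-in-$H$-fibres,'' etc. None of this is needed, and as written none of it constitutes a proof. The missing observation is elementary and is exactly what the paper uses: a subset of $X$ that is closed in $X$ and contained in a hard set is itself hard. Indeed, if $H$ is closed in $X\cup K$ and $N\subseteq H$ is closed in $X$, then $cl_{X\cup K}(N)\subseteq cl_{X\cup K}(H)=H\subseteq X$, so $cl_{X\cup K}(N)=cl_{X}(N)=N$, i.e.\ $N$ is closed in $X\cup K$ and hence hard. (Also note that \cite{gj60}, Theorem 1.21 already delivers $N$ as $C$-embedded in $X$, so there is nothing to ``arrange'' there, and $N$ is automatically closed in $X$.) Your fallback via Theorem 1.5 is weaker still: a free $hz$-ultrafilter need not admit a countable decreasing subfamily with empty intersection, and ``closed discrete, hence after a routine check $C$-embedded'' is false in general Tychonoff spaces. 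So the correct completion is the one-line hardness upgrade above, not the machinery you sketch.
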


\begin{proof}
Suppose $X$ contains a $C$-embedded copy of $\mathds{N}$which is
hard in $X$. So $cl_{\beta X}\mathds{N}\backslash\mathds{N}\subseteq\beta X\backslash\upsilon_{\chi}X$.
We know $\mathds{N}$ is $C$-embedded in $X$$\Rightarrow$ $\mathds{N}$
is $C^{*}$-embedded in $X$. Therefore $cl_{\beta X}\mathds{N}=\beta\mathds{N}$
$\Rightarrow$$\beta\mathds{N}\backslash\mathds{N}\subseteq\beta X\backslash\upsilon_{\chi}X$
$\Rightarrow$ $2^{c}=|\beta\mathds{N}\backslash\mathds{N}|\leq|\beta X\backslash\upsilon_{\chi}X|$
i.e. $|\beta X\backslash\upsilon_{\chi}X|\geq2^{c}$ $\Rightarrow$$X$
is not nearly pseudocompact. Contrapositively, if $X$ is nearly pseudocompact
then $X$ does not contain a $C$-embedded copy of $\mathds{N}$which
is hard in $X$.

Conversely, suppose $X$ does not contain a $C$-embedded copy of
$\mathds{N}$which is hard in $X$. We have to show that $X$ is nearly
pseudocompact. Let $X$ is not nearly pseudocompact. Then $C_{H}(X)\nsubseteq C^{*}(X)$;
i.e. there exists a function $f\in C_{H}(X)$ but $f\notin C^{*}(X)$.
So $f$ is undounded on $cl_{X}(X\backslash Z(f))$. So there exists
a $C$-embedded copy $D$ of $\mathds{N}$ in $cl_{X}(X\backslash Z(f))$.
As $D$ is closed in $X$, it is closed in $cl_{X}(X\backslash Z(f))$.
Since $f\in C_{H}(X)$, $cl_{X}(X\backslash Z(f))$ is hard in $X$
and hence $D$ is hard in $X$, which contradicts our first assumption.
Hence $X$ is nearly pseudocompact.
\end{proof}
It is well known that a space is compact if and only if any family
of non-empty closed sets with finite intersection property has non-empty
intersection. We have proved the following similar type of result
for nearly pseudocompact space.
\begin{thm}
$X$ is nearly pseudocompact if and only if any family of non-empty
hard sets with finite intersection property has non-void intersection.
\end{thm}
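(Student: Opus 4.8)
The plan is to derive the forward implication from the Henriksen--Rayburn characterisation that under near pseudocompactness every hard set is compact, and to derive the converse from the $hz$-filter characterisation recalled above; no new construction should be required.

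\emph{Forward direction.} I would assume $X$ is nearly pseudocompact and let $\mathcal{H}$ be a family of non-empty hard sets with the finite intersection property. By the Henriksen--Rayburn theorem each member of $\mathcal{H}$ is compact, hence closed in $X$. Fix $H_{0}\in\mathcal{H}$. The collection $\{H_{0}\cap H:H\in\mathcal{H}\}$ consists of non-empty (by the finite intersection property) closed subsets of the compact space $H_{0}$, and it again has the finite intersection property, since a finite intersection of its members equals $H_{0}$ intersected with a finite intersection of members of $\mathcal{H}$, which is non-empty. Compactness of $H_{0}$ then gives $\bigcap_{H\in\mathcal{H}}(H_{0}\cap H)\neq\emptyset$, that is, $\bigcap\mathcal{H}\neq\emptyset$. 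This step is routine.

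\emph{Converse.} Suppose every family of non-empty hard sets with the finite intersection property has non-void intersection. By the characterisation via $hz$-filters it suffices to prove that every $hz$-filter on $X$ is fixed. So let $\mathcal{F}$ be an $hz$-filter and let $\mathcal{B}$ be a base of $\mathcal{F}$ consisting of hard zero-sets. Being a filter base, $\mathcal{B}$ consists of non-empty sets and has the finite intersection property, and each of its members, being a zero-set that is hard in $X$, is in particular a hard set. Hence by hypothesis $\bigcap\mathcal{B}\neq\emptyset$, and since $\mathcal{B}$ is a base for $\mathcal{F}$ we obtain $\bigcap\mathcal{F}=\bigcap\mathcal{B}\neq\emptyset$. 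Thus $\mathcal{F}$ is fixed, and therefore $X$ is nearly pseudocompact.

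The point I would single out as the main obstacle is the converse, namely choosing a characterisation of near pseudocompactness into which the finite-intersection hypothesis plugs directly; the $hz$-filter characterisation is exactly suited to this. An alternative argument avoiding it runs by contraposition: if $X$ is not nearly pseudocompact, the Henriksen--Rayburn theorem yields a non-compact hard set $H$, hence a family of closed subsets of $H$ with the finite intersection property and empty intersection; since a closed subset of a hard set is itself hard in $X$ (being closed in a set that is closed in $X\cup K$), such a family would contradict the hypothesis. Either route keeps the proof short.
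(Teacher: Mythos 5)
Your proposal is correct and follows essentially the same route as the paper: the forward direction via the Henriksen--Rayburn fact that hard sets are compact, and the converse by feeding the hard base of an $hz$-filter into the hypothesis and invoking the fixed-$hz$-filter characterisation. Your sketched alternative for the converse (contraposition through a non-compact hard set, whose closed subsets remain hard) is also valid, but it is not needed and the paper does not use it.
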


\begin{proof}
Let $X$ is nearly pseudocompact. Then every hard set is compact and
any family of compact sets having finite intersection property has
non-empty intersection. Hence the result follows.

Conversely, suppose any family of hard sets with finite intersection
property has a non-void intersection. Let $\mathcal{F}$ be a $hz$-filter
in $X$. So there exists a base $\mathcal{H}$ for $\mathcal{F}$
consisting of hard sets. Since $\mathcal{F}$ is a filter , it has
finite intersection property. Then $\mathcal{H}$ has also finite
intersection property and by our assumption $\cap\mathcal{H}\neq\phi$.
Again $\cap\mathcal{H}\subseteq\cap\mathcal{F}\Rightarrow\cap\mathcal{F}\neq\phi$.
Hence $\mathcal{F}$ being arbitrary, every $hz$-filter in $X$ is
fixed. Therefore $X$ is nearly pseudocompact.
\end{proof}
Acknowledgements: The authors sincerely acknowledge the support received
from DST FIST programme (File No. SR/FST/MSII/2017/10(C))

\end{document}